\documentclass[leqno,10pt]{article}

\usepackage{amssymb, amsmath, amsthm, ascmac, mathrsfs}
\usepackage[all]{xy}

\usepackage{color}
\makeatletter \newcommand{\dashedrightarrow}[1][2pt]{%
\settowidth{\@tempdima}{$\rightarrow$}\rightarrow
\makebox[-\@tempdima]{\hskip-1.5ex\color{white}\rule[0.5ex]{#1}{1pt}}
\phantom{\rightarrow}
} 
\makeatother 


\theoremstyle{plain}
\newtheorem{thm}{Theorem}[section]
\newtheorem*{thm*}{Theorem}

\newtheorem{lmm}[thm]{Lemma}
\newtheorem{crl}[thm]{Corollary}

\theoremstyle{remark}

\theoremstyle{definition}

\setlength{\oddsidemargin}{0mm}  
\setlength{\evensidemargin}{0mm} 
\setlength{\oddsidemargin}{0mm}  
\setlength{\evensidemargin}{0mm} 
\setlength{\topmargin}{0mm}      
\setlength{\topskip}{0mm}        
\setlength{\headsep}{0mm}        
\setlength{\headheight}{0mm}     
\setlength{\textwidth}{160mm}    
\setlength{\textheight}{240mm}   




\renewcommand{\L}{\Lambda}



\newcommand{\ab}{\mathrm{ab}}

\newcommand{\Char}{\operatorname{char}}


\newcommand{\Z}{\mathbb{Z}}


\newcommand{\inj}{\hookrightarrow}
\newcommand{\surj}{\twoheadrightarrow}

\newcommand{\Coker}{\operatorname{Coker}}
\newcommand{\Ker}{\operatorname{Ker}}
\newcommand{\isomto}{\stackrel{\simeq}{\longrightarrow}}


\newcommand{\ol}[1]{\overline{#1}}

\newcommand{\Cf}{\textit{cf.}\;}

\def\sn{\smallskip\noindent}

\def\ssm{\smallsetminus}

\newcommand{\Gm}{\mathbb{G}_{m}}
\newcommand{\Ga}{\mathbb{G}_{a}}


\newcommand{\CH}{\operatorname{CH}}

\newcommand{\Spec}{\operatorname{Spec}}

\renewcommand{\div}{\operatorname{div}}
\newcommand{\red}{\mathrm{red}}


\newcommand{\Cbar}{\overline{C}}
\renewcommand{\mod}{\operatorname{mod}}

\renewcommand{\L}{\mathscr{L}}
\newcommand{\M}{\mathscr{M}}
\newcommand{\N}{\mathscr{N}}

\newcommand{\C}{\mathscr{C}}
\newcommand{\K}{\mathscr{K}}
\newcommand{\sCH}{{\C}\!H}

\newcommand{\Gal}{\operatorname{Gal}}
\renewcommand{\O}{\mathscr{O}}

\newcommand{\otimesM}{\!\stackrel{M}{\otimes}\!}

\newcommand{\Tr}{\operatorname{Tr}}

\title{On certain products of algebraic groups over a finite field}

\author{Toshiro Hiranouchi}

\begin{document}
\maketitle
\begin{abstract}      
Let $G_1,\ldots ,G_n$ be smooth connected and commutative algebraic groups 
over a finite field $F$. 
We show that 
$T(G_1,\ldots, G_n)(\Spec F) = 0$ 
if $n\ge 2$, 
where $T(G_1,\ldots, G_n)$ is 
the reciprocity functor 
of Ivorra-R\"ulling 
associated with those algebraic groups as reciprocity functors.  
We also show that 
the finiteness of 
$(G_1\otimesM \cdots \otimesM G_n)(\Spec F)$
the tensor product 
of $G_1,\ldots ,G_n$ in the category of Mackey functors. 
We apply this to prove that, 
for a product of open curves, 
the finiteness of 
the relative Chow group and 
an abelian fundamental group 
which classifies abelian coverings with 
bounded ramification along the boundary. 
\end{abstract}


\section{Introduction}
A Mackey functor over a perfect field $F$ 
in the sense of \cite{Kahn92b} is a co- and contravariant functor 
from the category of \'etale schemes over $F$ to the category of abelian groups. 
A smooth connected and commutative algebraic group $G$ over the field $F$ 
is regarded as a Mackey functor by the correspondence 
$x \mapsto G(x)$. 
Such algebraic group $G$ can be extended to 
a Nisnevich sheaf with transfers on the category of regular schemes over $F$ with dimension $\le 1$. 
Furthermore, it satisfies the following condition which is the so-called reciprocity law 
(\cite{IR}, Prop.\ 2.2.2): 
For any open (=non-proper) regular connected curve $C$ over $F$ and a section $a\in G(C)$ 
there exists an effective divisor $D$ on the smooth compactification $\Cbar$ of $C$ 
with support in the boundary $\Cbar \ssm C$  such that  
$$
 \sum_{x\in C} v_x(f) \Tr_{x/x_C}s_x(a) =  0
$$
for any $f \neq 0$ in the function field $F(C)$ of $C$ such that $f\equiv 1\, \mod D$, 
that is, $\div(f-1) \ge D$ as Weil divisors, 
where $v_x$ is the valuation at $x$, 
$s_x:G(C) \to G(x)$ is the pull-back along the natural inclusion $x\inj C$ and 
$\Tr_{x/x_C} :G(x) \to G(x_C)$ is the push-forward 
along the finite map $x\to x_C := \Spec H^0(\Cbar, \O_{\Cbar})$. 
F.~Ivorra and K.~R\"ulling \cite{IR} has 
introduced the notion of a {\it reciprocity functor} 
as a Nisnevich sheaf with transfers on the category of regular schemes over $F$ with dimension $\le 1$ 
satisfying several axioms including the one like the reciprocity law as above. 
They have also introduced a ``product''  
$T(\M_1,\ldots ,\M_n)$ 
associated to reciprocity functors $\M_1,\ldots ,\M_n$ 
in the quasi-abelian category of reciprocity functors 
(for the precise definition of the ``product'', see \cite{IR} Def.\ 4.2.3). 
By the very construction of the product $T$, 
as a Mackey functor $T(\M_1,\ldots , \M_n)$ 
is a quotient of the tensor product $\M_1 \otimesM \cdots \otimesM \M_n$ 
as Mackey functors (for the definition, see (\ref{eq:M}) in the next section). 
Hence we have a canonical surjection 
$$
  (\M_1 \otimesM \cdots \otimesM \M_n)(\Spec F) \surj T(\M_1\ldots , \M_n)(\Spec F). 
$$
Although 
the tensor product $\,\otimesM\,$ 
gives a structure of a symmetric monoidal category 
in the abelian category of Mackey functors, 
it is not known that whether this product $T$  
satisfies the associativity and 
then gives a monoidal structure or not. 
However, this product coincides with the $K$-group 
of B.~Kahn and T.~Yamazaki 
\cite{KY12} 
if we take homotopy invariant Nisnevich sheaves with transfers 
as reciprocity functors. 
In particular, we obtain 
an isomorphism 
$$
T(G_1,\ldots , G_n)(\Spec F) \simeq K(F; G_1,\ldots ,G_n),
$$ 
for semi-abelian varieties $G_1,\ldots ,G_n$ over $F$, 
where $K(F;G_1,\ldots ,G_n)$ is Somekawa's $K$-group \cite{Som90} 
which was limited on considering only semi-abelian varieties.  
For semi-abelian varieties $G_1,\ldots ,G_n$ over a {\it finite field} $F$,
B.~Kahn in \cite{Kahn92} showed 
that 
\begin{equation}
   K(F;G_1,\ldots ,G_n) = (G_1\otimesM \cdots \otimesM G_n)(\Spec F) =0 
\label{eq:Kahn}
\end{equation}
if $n>1$. 
Because of the isomorphism (\cite{Som90}, Thm.\ 1.4) 
$$
  K(F; \overbrace{\Gm,\ldots ,\Gm}^n) \isomto K_n^M(F),
$$
where $K_n^M(F)$ is the Milnor $K$-group of the field $F$, 
these results generalize the classical fact 
that $K_n^M(F) =0$ if $F$ is a finite field $F$ and $n>1$. 
For algebraic groups $G_1,G_2$ which may contain unipotent part, 
it is easy to see that 
$(G_1 \otimesM G_2)(\Spec F)$ may not be trivial.  
In this note, we shall show the following theorem. 

\begin{thm}[Thm.\ \ref{thm:S}, \ref{thm:M}]
Let $G_1,\ldots, G_n$ be smooth commutative and connected algebraic groups over 
a finite field $F$ with characteristic $\neq 2$ for $n>1$. 
Then we have
$$
  T(G_1,\ldots ,G_n)(\Spec F) = 0,\quad (G_1\otimesM \cdots \otimesM G_n)(\Spec F) \mbox{ is finite}.
$$
\end{thm}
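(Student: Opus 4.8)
\medskip
\noindent\textbf{Plan of proof.}
Write $p=\Char F$. The argument is a d\'evissage along the canonical structure of commutative algebraic groups, which leaves two inputs: Kahn's vanishing $(\ref{eq:Kahn})$ for semi-abelian varieties, and a de~Rham--Witt style description of the products that involve an additive factor, which over a finite field produces only finite groups — and, in positive degree, the zero group.

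\smallskip
\noindent\emph{Step 1 (d\'evissage).}
By Chevalley's theorem each $G_i$ sits in an exact sequence $0\to L_i\to G_i\to A_i\to 0$ with $A_i$ an abelian variety and $L_i$ a connected commutative linear group, and, $F$ being perfect, $L_i\cong T_i\times U_i$ with $T_i$ a torus and $U_i$ smooth connected unipotent; moreover $U_i$ admits a filtration with graded pieces $\Ga$, for instance through the Witt tower $0\to\W_{r-1}\onto{V}\W_r\to\Ga\to 0$. The Mackey tensor product $\otimesM$ is right exact in each variable and commutes with finite products, and the analogous properties for $T(-,\dots,-)$ follow from its construction in \cite{IR}; applying these exact sequences one variable at a time reduces both assertions of the theorem to the case in which each $G_i$ is a torus, an abelian variety, or $\Ga$. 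If none of the $G_i$ equals $\Ga$, then $G_1,\dots,G_n$ are semi-abelian and $(\ref{eq:Kahn})$ gives $(G_1\otimesM\cdots\otimesM G_n)(\Spec F)=0$, hence also $T(G_1,\dots,G_n)(\Spec F)=0$ by the surjection recalled in the introduction. We may therefore assume that $G_1=\Ga$; note that then $G_1\otimesM\cdots\otimesM G_n$ is killed by $p$, since $\Ga$ is.

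\smallskip
\noindent\emph{Step 2 (the additive factor).}
With $G_1=\Ga$ the plan is to show that $(\Ga\otimesM G_2\otimesM\cdots\otimesM G_n)(\Spec F)$ and its reciprocity quotient $T(\Ga,G_2,\dots,G_n)(\Spec F)$ are governed by modules of absolute (Witt-)differential forms of $F$: via the differential symbol of Bloch--Kato--Gabber and its reciprocity-functor refinements, one expects $T(\Ga,G_2,\dots,G_n)(\Spec F)$ to be a subquotient of $\bigoplus_{j\ge 1}\W_m\Omega^{j}_{F/\Fp}$ for a suitable $m$ (the absence of a degree-zero term being forced by the reciprocity relations), whereas $(\Ga\otimesM G_2\otimesM\cdots\otimesM G_n)(\Spec F)$ may in addition carry a degree-zero contribution built out of $\W_m(F)=\W_m\Omega^{0}_{F/\Fp}$ through the multiplication map $\Ga\otimesM(-)\twoheadrightarrow\Ga$. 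Since $F/\Fp$ is separable, $\Omega^1_{F/\Fp}=0$, so $\W_m\Omega^{j}_{F/\Fp}=0$ for all $j\ge 1$ while $\W_m(F)$ is finite. Hence $T(G_1,\dots,G_n)(\Spec F)=0$ and, re-assembling through Step 1, $(G_1\otimesM\cdots\otimesM G_n)(\Spec F)$ is finite.

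\smallskip
\noindent\emph{The main obstacle.}
The hard part is Step 2: making the comparison with (Witt-)differential forms precise, with matching behaviour of restrictions, transfers and the reciprocity relations, and pinning down the exact degrees that occur (morally this is the yoga of weights — an $n$-fold product of the motives attached to the $G_i$ has weight $\ge n\ge 2$, and the ``$H^0$'' over a finite field of a positive-weight object is finite and, once reciprocity is imposed, zero — but for algebraic groups with a unipotent part there is no motivic shortcut and it must be done by hand). This is also the only place where the hypothesis $\Char F\neq 2$ is used: in characteristic $2$ the de~Rham--Witt comparison for symbols with an additive entry acquires a correction term, and indeed $T(G_1,G_2)(\Spec F)$ need not vanish when $F=\mathbb{F}_{2}$.
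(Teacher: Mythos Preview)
Your Step~1 d\'evissage is fine and is essentially what the paper does (the paper stops at the two-step filtration $0\to U_i\to G_i\to A_i\to 0$ with $U_i$ unipotent and $A_i$ semi-abelian, rather than going all the way down to $\Ga$, tori, and abelian varieties, but this is immaterial).

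The genuine gap is Step~2. You yourself flag it as ``the main obstacle'' and say it ``must be done by hand'', but you do not do it. Two concrete problems:
\begin{itemize}
\item The de~Rham--Witt comparison you invoke is only available for $T(\Ga,\Gm^{\times n})\simeq\Omega^n$. There is no such description for $T(\Ga,A_2,\dots,A_n)$ or $(\Ga\otimesM A_2\otimesM\cdots)(\Spec F)$ when some $A_j$ is an abelian variety, and your argument gives no reason why such a product should embed into Witt differentials. So after your reduction the case ``one $\Ga$, the rest semi-abelian but not all tori'' is simply not covered.
\item For the Mackey finiteness you need $(\Ga)^{\otimesM n}(\Spec F)$ finite, and nothing in the de~Rham--Witt yoga gives this: the Mackey product has no reciprocity relations imposed, so the ``degree $\ge 1$'' heuristic does not apply.
\end{itemize}

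The paper's route avoids de~Rham--Witt entirely and is much more elementary. For $T$: the mixed term $T(U,A,\M)$ vanishes already at the Mackey level because $\Ga\otimesM A=0$ for any semi-abelian $A$ (choose $x'/x$ with $j^\ast b\in pA(x')$, use surjectivity of the additive trace $\Tr_{x'/x}$ on $\Ga$, and apply the projection formula); the purely unipotent term $T(U_1,U_2,\M)$ is zero by a result of Ivorra--R\"ulling (this is where $\Char F\neq 2$ enters, not a de~Rham--Witt correction), and the semi-abelian term is Kahn. For the Mackey finiteness: the same lemma $\Ga\otimesM A=0$ and Kahn reduce to $(\Ga)^{\otimesM n}(\Spec F)$, which is handled by an explicit induction on $n$ showing that, modulo a finite subspace, every symbol $\{a_1,\dots,a_n\}_{x'/x}$ collapses via the projection formula to one of the form $\{\Tr_{x'/x}(a_1\cdots a_n),1,\dots,1\}_{x/x}$, of which there are only finitely many.
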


As an application of (\ref{eq:Kahn}), 
the class field theory of 
a product of projective smooth curves over a finite field, 
a special case of 
the higher dimensional class field theory of S.~Bloch, K.~Kato and S.~Saito (e.g., \cite{KS83b}) 
is reduced from the 
classical (unramified) class field theory  
(= class field theory of curves over a finite field) and 
Lang's theorem; the reciprocity map 
on a normal variety over a finite field 
has dense image. 
In Section \ref{sec:app}, we will pursue 
related results on the (ramified)  
class field theory of a product of open (=non-proper) curves as a byproduct 
of the above theorem. 
In particular, we obtain a finiteness of 
the relative Chow group $\CH_0(X,D)$ 
for a product of smooth curves $X = X_1\times \cdots \times X_n$ 
over a finite field 
and an effective divisor $D$ 
on the smooth compactification $\ol{X}$ of $X$ with support in $\ol{X} \ssm X$ 
(Thm.\ \ref{thm:CH}).

Throughout this note, 
we mean 
by an {\it algebraic group} 
a smooth connected and commutative group scheme over a field. 
For a field $F$, we denote by $\Char(F)$ the characteristic of $F$. 

\medskip\noindent
{\it Acknowledgments.} 
A part of this note was written 
during a stay of the author at the Duisburg-Essen university. 
He thanks the institute for its hospitality. 
Most of what the author know about  
relative Chow groups and Albanese varieties 
from Henrik Russell.

\section{Finiteness}
First we recall the definition and some properties of 
the product $\,\otimesM\,$ 
in the category of Mackey functors 
and those of the product of reciprocity functors 
following \cite{IR}.

Let $F$ be a perfect field. 
We call a morphism $x\to \Spec F$ 
a {\it finite point}\/ if $x = \Spec E$ for some finite field extension of $F$. 
A Mackey functor $M$ over $F$ 
in the sense of \cite{Kahn92b} is determined by 
its value $M(x)$ on finite points $x\to \Spec F$. 
For Mackey functors $M_1,\ldots ,M_n$, 
the product $M_1 \otimesM \cdots \otimesM M_n$ 
called the {\it Mackey product}\/ 
is defined as follows.  
For any finite point $x\to \Spec F$, 
\begin{equation}
\label{eq:M}
  (M_1 \otimesM \cdots \otimesM M_n)(x) := \left(\bigoplus_{y\to x:\ \mbox{finite}} M_1(y)\otimes_{\Z}\cdots \otimes_{\Z} M_n(y)\right)\bigg/R(x)
\end{equation}
where $y\to x$ runs all finite points over $x$, 
and $R(x)$ is the subgroup generated by elements of the following form:  
For any morphism $j: y' \to y$ of finite points over $x$, 
and if $a_{i_0}' \in M_{i_0}(y')$ and $a_i \in M_i(y)$ 
for $i\neq i_0$, then 
\begin{equation}
  j^{\ast}(a_1) \otimes \cdots \otimes a_{i_0}' \otimes \cdots \otimes j^{\ast}(a_n) 
    - a_1\otimes \cdots \otimes j_{\ast}(a_{i_0}')\otimes \cdots \otimes a_n \in R(x), \label{eq:R}
\end{equation}
where $j^{\ast}$ and $j_{\ast}$ are 
the pull-back and the push-forward along $j$ respectively. 
We write $\{a_1,\ldots ,a_n\}_{y/x}$ 
for the image of $a_1\otimes \cdots \otimes a_n \in M_1(y)\otimes_{\Z} \cdots \otimes_{\Z} M_n(y)$ 
in the product $(M_1\otimesM \cdots \otimesM M_n)(x)$. 
Using this symbol, the above relation (\ref{eq:R}) defining $R(x)$ above 
gives the following equation which is often called the {\it projection formula}: 
\begin{equation}
  \label{eq:P}
  \{j^{\ast}(a_1) ,\ldots ,a_{i_0}' ,\ldots, j^{\ast}(a_n)\}_{y'/x} 
    = \{a_1 ,\ldots, j_{\ast}(a_{i_0}'),\ldots,  a_n\}_{y/x}.
\end{equation}

The Mackey product (\ref{eq:M}) satisfies the following properties: 

\begin{itemize}
\item[(M1)] The Mackey product $\,\otimesM\,$ 
gives a tensor product 
in the abelian category 
of the Mackey functors. 
Its unit is the constant Mackey functor $\Z$. 
In particular, the product commutes with the direct sum $\oplus$ and 
satisfies the associativity: $M_1\otimesM M_2 \otimesM M_3 \simeq (M_1\otimesM M_2) \otimesM M_3$.

\item[(M2)] 
The product $-\otimesM M$ is right exact for any Mackey functor $M$. 

\item[(M3)] 
For any finite point $j:x' \to x$, 
the push-forward 
$j_{\ast} : (M_1\otimesM \cdots \otimesM M_n)(x') \to  
(M_1\otimesM \cdots \otimesM M_n)(x)$  
along $j$  
is given by $j_{\ast}(\{a_1,\ldots ,a_n\}_{y'/x'}) = \{a_1,\ldots, a_n\}_{y'/x}$ on symbols. 
\end{itemize}

\begin{lmm}
\label{lem:UA} 
Let $G$ be a unipotent (smooth and commutative) algebraic group 
over $F$
and $A$ a semi-abelian variety over $F$. 
If $F$ is a perfect field of $\Char(F) = p>0$, 
we have $G \otimesM A = 0$.
\end{lmm}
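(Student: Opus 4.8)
The plan is to reduce to the case $G = \Ga$ and then to exploit the $p$-divisibility of the semi-abelian variety $A$ through the projection formula (\ref{eq:P}).

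First I would dispose of the unipotent group by dévissage. Since $F$ is perfect, $G$ is split: it carries a filtration $0 = G_0 \subset G_1 \subset \cdots \subset G_r = G$ by algebraic subgroups with each $G_i/G_{i-1} \simeq \Ga$. For any field extension $E/F$ one has $H^1(E,\Ga) = 0$, hence $H^1(E,G_{r-1}) = 0$ by dévissage, so on $E$-points the sequence $0 \to G_{r-1}(E) \to G(E) \to \Ga(E) \to 0$ is exact; that is, $0 \to G_{r-1} \to G \to \Ga \to 0$ is a short exact sequence of Mackey functors. As $-\otimesM A$ is right exact (M2), this yields an exact sequence $G_{r-1}\otimesM A \to G\otimesM A \to \Ga \otimesM A \to 0$, and an induction on $r$ reduces the lemma to proving $\Ga \otimesM A = 0$.

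For that it suffices to check $(\Ga \otimesM A)(x) = 0$ for every finite point $x = \Spec E_0$, and this group is generated by the symbols $\{a,b\}_{E/x}$ with $E/E_0$ finite, $a \in E = \Ga(E)$, $b \in A(E)$. Fix such a symbol. Multiplication by $p$ on $A$ is an isogeny, hence surjective on $\overline{E}$-points, so I can write $b = p b'$ with $b' \in A(\overline{E})$; put $E_1 = E(b')$. Because $F$, and therefore $E$, is perfect, $E_1/E$ is separable, so the trace $\Tr_{E_1/E}\colon E_1 \to E$ is surjective, and I may choose $a' \in E_1$ with $\Tr_{E_1/E}(a') = a$. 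Writing $j\colon \Spec E_1 \to \Spec E$, the projection formula (\ref{eq:P}) together with the $\Z$-bilinearity of the symbol then gives
\[
 \{a,b\}_{E/x} = \{j_\ast(a'),b\}_{E/x} = \{a', j^\ast(b)\}_{E_1/x} = \{a', p b'\}_{E_1/x} = \{p a',b'\}_{E_1/x} = \{0,b'\}_{E_1/x} = 0,
\]
since $p a' = 0$ in $\Ga(E_1)$. Hence $(\Ga \otimesM A)(x) = 0$, and the lemma follows.

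The computation itself is a one-liner; the points that need care are the structural inputs behind the first step — that a smooth connected unipotent group over a perfect field is split, and that connected unipotent groups have vanishing $H^1$ over a field, which is what makes $0 \to G_{r-1} \to G \to \Ga \to 0$ exact as a sequence of Mackey functors — together with keeping the variance in (\ref{eq:P}) straight. I do not expect a genuine obstacle beyond this bookkeeping. Finally, the hypothesis $\Char F = p > 0$ is used essentially, through $p\cdot a' = 0$ in $\Ga(E_1)$; the statement is false in characteristic zero.
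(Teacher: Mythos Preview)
Your proof is correct and follows essentially the same route as the paper's: reduce to $G=\Ga$ via a composition series and right exactness (M2), then kill each symbol by lifting $b$ to a $p$-th multiple over a finite extension and using surjectivity of the trace on $\Ga$ together with the projection formula. The only differences are cosmetic --- you justify the exactness of the Mackey-functor sequence via $H^1(E,\Ga)=0$ and the separability of $E_1/E$ explicitly, whereas the paper leaves these implicit, and the paper first uses (M3) to reduce to symbols of the form $\{a,b\}_{x/x}$ before running the same computation.
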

\begin{proof}
The unipotent group $G$ 
has a composition series: 
$$
  0 = G^r \subset \cdots \subset G^1 \subset G,
$$
each $G^i/G^{i+1}$ being isomorphic to $\Ga$. 
By the right exactness (M2), 
it is enough to show $(\Ga \otimesM A) = 0$.
By (M3) above, the assertion is reduced to showing $\{a, b\}_{x/x} = 0$ 
for any $a\in \Ga(x), b\in A(x)$. 
There exists a finite point $j:x'\to x$ such that 
$j^{\ast}(b) = pb'$ for some $b'\in A(x')$. 
Since the trace map (= the push-forward map on $\Ga$) 
$j_{\ast} = \Tr_{x'/x}:\Ga(x') \to \Ga(x)$ 
is surjective, we obtain 
\begin{align*}
  \{a,b\}_{x/x} 
&= \{\Tr_{x'/x}(a'), b\}_{x/x}\quad \mbox{for some $a'\in \Ga(x')$}\\ 
&= \{a', j^{\ast}b\}_{x'/x}\quad \mbox{by the projection formula (\ref{eq:P})}\\
&= \{a', pb'\}_{x'/x} \\
&= 0.
\end{align*}
The assertion follows from this.
\end{proof}

As noted before, 
a reciprocity functor is a Nisnevich sheaf with transfers 
on the category of regular schemes with dimension $\le 1$ 
satisfying several axioms (\cite{IR}, Def.\ 1.5.1). 
As examples, 
a constant Nisnevich sheaf, algebraic groups, 
the Milnor $K$-theory $\K_n^M$, 
the higher Chow group $\sCH_0(X,n)$ for some scheme $X$, 
Suslin's singular homology group $h_0(X)$  
and the sheaf of the absolute K\"ahler differentials $\Omega^n$ 
given by $X\mapsto \Omega_{X/\Z}^n$ 
are reciprocity functors (\cite{IR}, Sect.\ 2). 
Note that any reciprocity functor gives a Mackey functor 
by restricting to the category of points over $F$. 
It is known that 
the category of reciprocity functors forms a quasi-abelian category, 
especially, an exact category. 
Hence the notion of an {\it admissible exact sequence}  
$0\to \M' \to \M \to \M'' \to 0$ 
and a right exact functor are defined in the category of reciprocity functors. 
The ``product'' $T(\M_1,\ldots ,\M_n)$ for reciprocity functors $\M_1,\ldots , \M_n$ 
is a reciprocity functor and satisfies some functorial properties. 
\begin{itemize}
\item[(R1)] There are functorial isomorphisms 
$$
  T(\M_1,\ldots, \M_i,\ldots, \M_j ,\ldots, \M_n)\simeq T(\M_1,\ldots, \M_j,\ldots ,\M_i,\ldots, \M_n)
$$
and 
$$
T(\M_1,\ldots ,\M_i\oplus \M_i', \ldots ,\M_n) 
\simeq T(\M_1,\ldots ,\M_i, \ldots ,\M_n) \oplus 
T(\M_1,\ldots ,\M_i', \ldots ,\M_n).
$$ 
There is an admissible epimorphism 
$$
  T(\M_1,\M_2,\M_3)\surj T(T(\M_1,\M_2),\M_3).
$$
However, it is not known whether this map becomes an isomorphism. 

\item[(R2)] 
The functor $T(\M_1,\ldots, \M_{n-1},-)$ is right exact  
(\cite{IR}, Cor.\ 4.2.9). 

\item[(R3)] Assume $\Char(F)\neq 2$. 
We have 
$T(\M_1,\ldots ,\M_n) = 0$  
if two of $\M_i$'s are unipotent algebraic groups over $F$ (\cite{IR}, Thm.\ 5.5.1). 
\end{itemize}
By the very construction, 
for any finite point $x$ over $F$, 
the product 
$T(\M_1,\ldots,\M_n)(x)$ evaluated at $x$ 
is a quotient of the Mackey product $(\M_1\otimesM \cdots \otimesM \M_n)(x)$. 
More precisely, the product $T(\M_1,\ldots,\M_n)$ 
is defined to be 
the Nisnevich sheafification $\L_{\mathrm{Nis}}^{\infty}$ 
of a quotient $\L^{\infty}$ of the product 
$\L := \M_1\otimes \cdots \otimes \M_n$ 
whose underlying Mackey functor 
is the Mackey product $\M_1\otimesM \cdots \otimesM \M_n$. 
However, 
an isomorphism $\L^{\infty}(x) \isomto \L_{\mathrm{Nis}}^{\infty}(x)$ exists 
since any Nisnevich covering of $\Spec F$ refines a trivial covering. 

Next we show the following theorem a part of the main theorems:

\begin{thm}
\label{thm:S}
Let $F$ be a finite field with $\Char(F)\neq 2$ and 
$\M_1,\ldots ,\M_n$ reciprocity functors over $F$. 
If two of $\M_i$'s are (non-trivial) algebraic groups, 
then the underlying Mackey functor of $T(\M_1,\ldots ,\M_n)$ is trivial. 
In other words, $T(\M_1,\ldots ,\M_n)(x) = 0$ for any finite point $x\to \Spec F$. 
\end{thm}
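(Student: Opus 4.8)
The plan is to reduce, using the symmetry (R1) and the right exactness of $T$, to the case of two algebraic groups each of which is either unipotent or semi-abelian, and then to dispose of the three resulting cases by (R3), Lemma \ref{lem:UA}, and Kahn's vanishing (\ref{eq:Kahn}).

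By (R1) we may assume that $\M_1 = G$ and $\M_2 = H$ are the two algebraic groups. Over the perfect field $F$, the structure theory of smooth connected commutative algebraic groups provides canonical short exact sequences
$$
0 \to U_G \to G \to G^{\mathrm{sab}} \to 0, \qquad 0 \to U_H \to H \to H^{\mathrm{sab}} \to 0
$$
with $U_G, U_H$ unipotent and $G^{\mathrm{sab}}, H^{\mathrm{sab}}$ semi-abelian, and these are admissible exact sequences of reciprocity functors (cf.\ \cite{IR}). Since $T$ is right exact in each of its arguments (by (R1) and (R2)), and evaluation at a finite point $x$ takes an admissible right exact sequence of reciprocity functors to a right exact sequence of abelian groups (every Nisnevich covering of $x$ refines the trivial one), applying these sequences first in the slot of $G$ and then in the slot of $H$ reduces us to proving $T(\M_1, \ldots, \M_n)(x) = 0$ when $G$ and $H$ are each either unipotent or semi-abelian. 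Up to (R1) there remain three cases.

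If $G$ and $H$ are both unipotent, then $T(\M_1, \ldots, \M_n) = 0$ already as a reciprocity functor by (R3); this is the only place where the hypothesis $\Char(F) \neq 2$ is used, and it cannot be circumvented by working with $\otimesM$ alone, since $G_1 \otimesM G_2$ need not be trivial when $G_1, G_2$ have nontrivial unipotent part. In the remaining two cases I will instead show that the Mackey product $\M_1 \otimesM \cdots \otimesM \M_n$ vanishes at every finite point $x = \Spec E$, which suffices because $T(\M_1, \ldots, \M_n)(x)$ is a quotient of $(\M_1 \otimesM \cdots \otimesM \M_n)(x)$. By (\ref{eq:M}) this group is the Mackey product over $E$ of the restrictions $\M_i|_E$ of the $\M_i$ to finite points over $E$, evaluated at $\Spec E$; for an algebraic group this restriction is the Mackey functor of the base change over $E$, so $G|_E$ and $H|_E$ are again unipotent, resp.\ semi-abelian, over the finite field $E$. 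Hence $(G|_E \otimesM H|_E)(\Spec E) = 0$: by Lemma \ref{lem:UA} over $E$ when one of them is unipotent and the other semi-abelian (using $\Char(E) = p > 0$), and by the case $n = 2$ of (\ref{eq:Kahn}) over $E$ when both are semi-abelian. By the associativity and additivity of $\otimesM$ (M1) it follows that $(\M_1 \otimesM \cdots \otimesM \M_n)(x) \cong \bigl((G|_E \otimesM H|_E) \otimesM \M_3|_E \otimesM \cdots \otimesM \M_n|_E\bigr)(\Spec E) = 0$.

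The genuine mathematical content is supplied entirely by the three cited inputs (R3), Lemma \ref{lem:UA} and (\ref{eq:Kahn}); the points that require care, and which I expect to be the main obstacle, are the verification that the structure sequences above are admissible exact sequences of reciprocity functors, so that (R2) really applies after permuting arguments, and the base-change bookkeeping in the last step, where the vanishing statements stated over the ground field are invoked over each residue field $E$.
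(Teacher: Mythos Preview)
Your proof is correct and follows essentially the same route as the paper: reduce via (R1) and the admissible exact sequences $0\to U\to G\to G^{\mathrm{sab}}\to 0$ together with right exactness (R2) to the three cases, then kill the unipotent--unipotent case by (R3) and the remaining cases by showing the Mackey product already vanishes via associativity (M1) and Lemma~\ref{lem:UA} or Kahn's theorem~(\ref{eq:Kahn}). Your explicit base-change step to $E$ and your remark that evaluation at a finite point preserves admissible right exactness are exactly the details the paper leaves implicit (the latter is used when passing from the diagram~(\ref{eq:diagram}) of admissible exact sequences to vanishing at each $x$), and the admissibility of the structure sequences is precisely the point the paper handles by citing \cite{IR}, Lem.~3.2.12.
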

\begin{proof}
Here we show 
$T(G_1,G_2, \M) =0$  
for any algebraic groups $G_1,G_2$ and a reciprocity functor $\M$.  
The proof of the assertion 
$T(G_1,G_2, \M_1,\ldots ,\M_n) = 0$ 
for $n>1$ 
is exactly same as in the case of $n=1$. 
(Note also that since 
the product of the reciprocity functors does not satisfy 
the associativity (R1),  
we cannot reduce the assertion to $n=2$). 
The algebraic group $G_i$ has a decomposition 
\begin{equation}
\label{eq:dec}
0 \to U_i \to G_i \to A_i \to 0,
\end{equation}
where $U_i$ is a unipotent algebraic group 
and $A_i$ is a semi-abelian variety. 
The short exact sequence (\ref{eq:dec}) 
gives an admissible exact sequence 
in the category of reciprocity functors (\cite{IR}, Lem.\ 3.2.12). 
From the right exactness (R2) (and (R1)), 
we obtain 
the following admissible exact sequences 
\begin{equation}
\label{eq:diagram}
\vcenter{
  \xymatrix@R=5mm{
  T(U_1, U_2, \M)   \ar[d] & & T(A_1, U_2,\M)\ar[d]\\
  T(U_1, G_2,\M) \ar[d]\ar[r] 
    &T(G_1, G_2,\M) \ar[r] & T(A_1, G_2,\M) \ar[d]\ar[r] & 0 \\
  T(U_1, A_2,\M) \ar[d] & & T(A_1, A_2,\M)\ar[d] \\
  0  & & 0& .
  }
}
\end{equation}
By (R3), we obtain $T(U_1, U_2, \M) = 0$ as reciprocity functors. 
Since there is the surjection 
$(A_1\otimesM A_2\otimesM \M) \surj T(A_1, A_2, \M)$ 
of Mackey functors, 
and the canonical isomorphism 
$A_1\otimesM A_2\otimesM \M \simeq (A_1\otimesM A_2) \otimesM \M$, 
we obtain $T(A_1, A_2, \M) = 0$ as a Mackey functor by Kahn's theorem (\ref{eq:Kahn}). 
Similarly, Lemma \ref{lem:UA} implies 
$(U_1\otimesM A_2 \otimesM \M)  = T(U_1,A_2,\M) = 0$ 
and 
$(A_1\otimesM U_2 \otimesM \M)  = T(A_1,U_2,\M) = 0$.  
The assertion $T(G_1,G_2,\M) = 0$ follows from the above diagram. 
\end{proof}
\begin{crl}
Let $\M_1,\ldots , \M_m$ be reciprocity functors over a finite field $F$. 

\sn
$\mathrm{(i)}$ $T(\K_n^M,\M_1,\ldots, \M_m) =0$ as a Mackey functor if $n\ge 2$. 

\sn
$\mathrm{(ii)}$ $T(\Omega^n, \M_1,\ldots ,\M_m) = 0$ as a Mackey functor if $n\ge 1$. 

\sn
$\mathrm{(iii)}$ 
Let $X$ be a quasi-projective smooth and geometrically connected variety over $F$. 
Assume that there is a smooth projective and connected variety $\ol{X}$ over 
the algebraically closed field $\ol{F}$ containing $X_{\ol{F}} := X\otimes_F \ol{F}$ 
as an open subscheme. 
Then we have  
$T(h_0(X)^0, \M_1,\ldots, \M_m) =0$ as a Mackey functor 
if one of $\M_i$'s is an algebraic group, 
where $h_0(X)^0$ is the kernel of the degree map 
$\deg:h_0(X) \to h_0(\Spec F) \simeq \Z$ in the category of reciprocity functors. 
\end{crl}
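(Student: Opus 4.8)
The plan is to deduce all three parts from Theorem \ref{thm:S} together with the basic structural facts (R1)--(R3) and the known triviality/finiteness results for semi-abelian varieties. In each case the strategy is the same: either directly recognize the functor as one to which Theorem \ref{thm:S} (or property (R3)) applies after resolving it by algebraic groups, or reduce to the Milnor $K$-theory computation $K_n^M(F)=0$ over a finite field.

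For part (i), I would recall from \cite{IR}, Sect.\ 2, that $\K_n^M$ is built out of $\Gm$ via the surjection (of reciprocity functors or at least at the level of Mackey functors) coming from the symbol map, so that the Mackey product $\K_n^M \otimesM \M_1 \otimesM \cdots \otimesM \M_m$ receives a surjection from $\Gm^{\otimesM n} \otimesM \M_1 \otimesM \cdots \otimesM \M_m$; since $n\ge 2$ there are two $\Gm$-factors, and the easiest route is simply to invoke Theorem \ref{thm:S} with $G_1=G_2=\Gm$, noting that $T(\K_n^M,\M_1,\dots,\M_m)(x)$ is a quotient of the corresponding Mackey product and that $T(\Gm,\dots,\Gm,\M_1,\dots,\M_m)(x)=0$. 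Alternatively one uses the identification of $T$ for homotopy invariant sheaves with the Somekawa/Kahn--Yamazaki $K$-group and the isomorphism $K(F;\Gm,\dots,\Gm)\simeq K_n^M(F)$ recalled in the introduction, combined with $K_n^M(F)=0$. For part (ii), the sheaf $\Omega^n$ fits, by \cite{IR}, Sect.\ 2, into a presentation involving $\Ga$ (via $d\!\log$ and $\Omega^n\cong \Omega^1\otimes \cdots$, or more directly $\Omega^1$ is an extension/quotient built from $\Ga$ and $\Gm$), so $\Omega^n$ as a Mackey functor is dominated by something with a $\Ga$-factor; since $\Ga$ is unipotent, I would like to apply (R3). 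But (R3) needs \emph{two} unipotent factors, so for $n\ge 1$ with only one $\Omega$-factor I would instead note that $\Omega^n$ itself is (a quotient of) a product of unipotent and multiplicative pieces and that, after taking the product with any $\M_i$ that is an algebraic group and resolving that $\M_i$ by its unipotent and semi-abelian parts as in the diagram (\ref{eq:diagram}), each resulting term is handled by Theorem \ref{thm:S}, (R3), or Lemma \ref{lem:UA}; the key point is that $\Omega^n$ over a perfect field of characteristic $p$ contributes a unipotent factor and the remaining factor, being an algebraic group, contributes either a unipotent factor (then (R3) applies) or a semi-abelian one (then Lemma \ref{lem:UA} and Kahn's theorem apply). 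I expect part (ii) to require a short lemma making precise that the reciprocity functor $\Omega^n$ is, at the level of Mackey functors, a successive extension of unipotent algebraic groups (over a finite, hence perfect, field); granting that, the argument is the same as the proof of Theorem \ref{thm:S}.

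For part (iii), the key input is that $h_0(X)^0$ is a reciprocity functor which, over the base change to $\ol F$ where $X$ admits a smooth projective compactification, is governed by the Albanese variety: by the generalized Albanese theory (Serre, Russell) there is a semi-abelian variety $\Alb(X)$ and a morphism of reciprocity functors $h_0(X)^0 \to \Alb(X)$ which becomes an isomorphism (or at least a surjection with ``small'' kernel) after such base change, and in any case $h_0(X)^0$ is an extension built from algebraic groups. Since one of the $\M_i$ is assumed to be an algebraic group $G$, the Mackey product $h_0(X)^0 \otimesM \M_1 \otimesM \cdots \otimesM \M_m$ surjects onto $T(h_0(X)^0,\M_1,\dots,\M_m)(x)$, and I would resolve $h_0(X)^0$ into its unipotent and semi-abelian constituents and likewise resolve $G$ via (\ref{eq:dec}); then every bilinear piece $T(U,U',\dots)$, $T(U,A',\dots)$, $T(A,U',\dots)$, $T(A,A',\dots)$ vanishes by (R3), Lemma \ref{lem:UA}, and Kahn's theorem (\ref{eq:Kahn}) respectively, exactly as in the proof of Theorem \ref{thm:S}. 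The main obstacle will be justifying the structural claim about $h_0(X)^0$: one needs that $h_0(X)$ (equivalently Suslin homology in degree $0$) of a smooth variety with a compactification is, as a reciprocity functor, an extension of a finitely generated group by a group dominated by a semi-abelian variety together with a unipotent correction in positive characteristic. This is where I would cite Russell's work on Albanese varieties and the comparison of $h_0$ with it; modulo that identification, part (iii) is a formal consequence of Theorem \ref{thm:S}'s proof strategy. I would also remark that the hypothesis $\Char(F)\neq 2$ is inherited silently from (R3), which is in force throughout the corollary because we are over a finite field and the relevant vanishing results require it.
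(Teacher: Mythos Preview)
Your plan for part (i) is close to the paper's: both reduce to the Mackey product $(\Gm)^{\otimesM\,n}\otimesM\M_1\otimesM\cdots\otimesM\M_m$ via the isomorphism $T(\Gm^{\times n})\simeq\K_n^M$, and then kill it. Note, though, that what kills it is Kahn's theorem (\ref{eq:Kahn}) applied to $(\Gm\otimesM\Gm)=0$ together with associativity (M1), not Theorem \ref{thm:S}; the latter is a statement about $T$ with two algebraic-group slots, and here the slots of $T$ are $\K_n^M,\M_1,\ldots,\M_m$, none of which is an algebraic group.

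Part (ii) has a genuine gap. You try to pair the unipotent factor coming from $\Omega^n$ with ``any $\M_i$ that is an algebraic group'', but part (ii) has \emph{no} such hypothesis on the $\M_i$. The key fact you are missing is the precise identification $\Omega^n\simeq T(\Ga,\Gm^{\times n})$ from \cite{IR}, Sect.~5: it supplies, inside $\Omega^n$ itself, \emph{both} a unipotent factor $\Ga$ and a semi-abelian factor $\Gm$. One then argues exactly as in (i), with Lemma \ref{lem:UA} (giving $\Ga\otimesM\Gm=0$) replacing Kahn's theorem. Neither (R3) nor any hypothesis on the $\M_i$ is needed.

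Part (iii) also has a gap, and the paper's route is genuinely different from yours. You propose to resolve $h_0(X)^0$ itself as a reciprocity functor built from algebraic groups via an Albanese comparison; you correctly flag this as the ``main obstacle'', and indeed for a higher-dimensional $X$ such a structural description of $h_0(X)^0$ is not available in the form you need. The paper avoids this entirely: given a symbol $\ol{\{a,b\}}_{x/x}$ with $a$ represented by a $0$-cycle $\sum n_i x_i$ on $X$, it uses Poonen's Bertini-type theorem to find a smooth projective curve $\Cbar\subset\ol{X}$ through all the $x_i$, sets $C=\Cbar\cap X$, and observes that the symbol lies in the image of $T(h_0(C)^0,G)(x)\to T(h_0(X)^0,G)(x)$. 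For the \emph{curve} $C$, the identification $h_0(C)^0\simeq J_C$ with the generalized Jacobian (a semi-abelian variety) is classical, and Theorem \ref{thm:S} (in fact just Kahn's theorem and Lemma \ref{lem:UA}) finishes the job. Finally, your closing remark is inaccurate: since this argument for (iii) never invokes (R3), the restriction $\Char(F)\neq 2$ is \emph{not} needed there, as the paper explicitly notes.
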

\begin{proof}
By the computations of the products (\cite{IR}, Sect.\ 5) 
we have 
$T(\Gm^{\times n}) \simeq \K_n^M$ as Mackey functors 
and $T(\Ga, \Gm^{\times n}) \simeq \Omega^n$ as reciprocity functors.  
Here we used the notation $T(\M, \N^{\times n}) := T(\M, \overbrace{\N,\ldots,\N}^{n\mbox{-times}})$ 
for some reciprocity functors $\M$ and $\N$. 
For any finite point $x$, there are surjections 
\begin{align*}
  ((\Gm)^{\otimesM\,n}\otimesM \M_1 \otimesM \cdots \otimesM \M_m)(x) 
   &\surj (T(\Gm^{\times n}) \otimesM \M_1 \otimesM \cdots \otimesM \M_m) (x) \\
&\surj T(\K_n^M, \M_1,\ldots, \M_n)(x).
\end{align*}
The assertion (i) follows from (\ref{eq:Kahn}). 
The proof of (ii) is same as (i) 
(by using Lemma \ref{lem:UA} instead of (\ref{eq:Kahn})). 
For (iii), we show only $T(h_0(X)^0, G) (x) = 0$ 
for $x = \Spec F$ and an algebraic group $G$ over $F$. 
The general case can be proved by the same way. 
The proof is essentially same as in the one of 
Proposition 3.7 in \cite{Akhtar04c}. 
To show the assertion, 
it is enough to show 
that the image 
$\ol{\{a, b\}}_{x/x}$ of 
$\{a,b\}_{x/x}$ in $T(h_0(X)^0, G) (x)$ 
is trivial. 
The element $a\in h_0(X)(x) = h_0(X)$ is represented by 
a $0$-cycle $\sum_{i=1}^n n_i x_i$ ($n_i\neq 0$) 
on $X$. 
By the Bertini-type theorem of Poonen, 
there exists a smooth projective curve $\Cbar \subset \ol{X}$ 
such that 
$\Cbar$ contains $x_i$ for all $i$. 
Denoting $C := \Cbar \cap X$, $\ol{\{a, b\}}_{x/x}$ 
is in the image of the 
canonical map 
$T(h_0(C)^0, G)(x) \to T(h_0(X)^0, G)(x)$ induced by the inclusion $C\inj X$. 
Since $h_0(C)^0$ is isomorphic to the generalized Jacobian variety 
$J_{C}$ of Rosenlicht which is a semi-abelian variety, 
we obtain $\ol{\{a,b\}}_{x/x} =0$ from Theorem \ref{thm:S}. 
(The field $F$ may have $\Char(F) = 2$, 
since we do not need (R3) to show the assertion.) 
\end{proof}

Finally, we consider the Mackey product of algebraic groups over a finite field. 
\begin{thm}
\label{thm:M}
Let $G_1,\ldots ,G_n$ be algebraic groups 
over a finite field $F$. 
Then the group $(G_1\otimesM \cdots \otimesM G_n) (x)$ is finite 
for any finite point $x$ over $F$. 
\end{thm}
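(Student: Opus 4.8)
The plan is to reduce, by dévissage along the structure of commutative algebraic groups, to the additive group, and then to treat the Mackey product $(\Ga^{\otimesM n})(\Spec F)$ directly. First, a finite point $x$ equals $\Spec E$ for some finite field extension $E/F$; the finite points above $x$ are precisely the finite extensions of $E$, and base change of a smooth group scheme does not change its values on fields containing $E$, so $(G_1\otimesM\cdots\otimesM G_n)(x)$ is the analogous Mackey product over $E$, evaluated at $\Spec E$. Hence we may assume $x=\Spec F$. If $n=1$ there is nothing to prove, since $G_1(F)$ is the set of $F$-rational points of a scheme of finite type over the finite field $F$. Assume $n\ge 2$.

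Write each $G_i$ as an extension $0\to U_i\to G_i\to A_i\to 0$ with $U_i$ unipotent and $A_i$ semi-abelian, as in (\ref{eq:dec}). Using the right exactness of $-\otimesM M$ (M2) together with the additivity and associativity of $\otimesM$ (M1), one filters $G_1\otimesM\cdots\otimesM G_n$ through these extensions, one variable at a time. Every Mackey product having a unipotent factor $U_i$ and a semi-abelian factor $A_j$ vanishes by Lemma \ref{lem:UA}, so the ``mixed'' terms disappear; and $A_1\otimesM\cdots\otimesM A_n=0$ by Kahn's theorem (\ref{eq:Kahn}), applied over every finite extension of $F$ so as to get the vanishing of the whole Mackey functor, because $n\ge 2$. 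What remains is that $(G_1\otimesM\cdots\otimesM G_n)(\Spec F)$ is a quotient of $(U_1\otimesM\cdots\otimesM U_n)(\Spec F)$. Since each $U_i$ has a composition series with successive quotients isomorphic to $\Ga$ (as recalled in the proof of Lemma \ref{lem:UA}), a further application of right exactness shows that $(U_1\otimesM\cdots\otimesM U_n)(\Spec F)$ is a finite iterated extension of copies of $(\Ga^{\otimesM n})(\Spec F)$. An iterated extension of finite abelian groups being finite, it suffices to prove that $(\Ga^{\otimesM n})(\Spec F)$ is finite.

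For this last point, note that for each finite extension $E/F$ the group $\Ga(E)=E$ is a finite-dimensional $\Fp$-vector space, hence finite; so $(\Ga^{\otimesM n})(\Spec F)$ would be finite as soon as one knows that only finitely many extensions $E/F$ contribute, i.e.\ that every symbol $\{a_1,\ldots,a_n\}_{E/F}$ with $[E:F]$ larger than some explicit bound is a $\Z$-linear combination of symbols over extensions of bounded degree. Here I would exploit features of finite fields not available for general perfect fields: finite extensions $E/F$ are separable, so the trace form is nondegenerate and $E$ has a normal basis over $F$; the $p$-power Frobenius is an automorphism of the Mackey functor $\Ga$ over a finite field; and each step in a tower of finite fields is cyclic, so Artin--Schreier theory governs the elementary extensions. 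Decomposing an entry of a symbol along a suitable basis and applying the projection formula (\ref{eq:P}) pushes symbols onto ones whose ``new'' part lies in the kernel of the trace, and an induction on $[E:F]$, crucially using that $F$ is finite, should cut the contributing extensions down to a finite set; alternatively one might hope to compute $(\Ga^{\otimesM n})(\Spec F)$ outright in terms of (de Rham--)Witt differentials of $F$, which vanish in positive degree over a perfect field, making finiteness transparent. This step --- passing from the \emph{vanishing} results available for semi-abelian varieties, where a section can be made divisible after a base change (a device that is vacuous for $\Ga$, since $\Ga(E)$ is already uniquely $\ell$-divisible for $\ell\ne p$ and killed by $p$), to a \emph{finiteness} statement for the additive group --- is where the genuine content lies, and I expect the bounding argument to be the main obstacle. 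Note that $\Char(F)=2$ is not excluded here, so one cannot shortcut through the vanishing of the reciprocity product $T$ provided by Theorem \ref{thm:S}.
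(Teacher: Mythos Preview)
Your reduction steps---replacing $x$ by $\Spec F$, d\'evissage through the extension $0\to U_i\to G_i\to A_i\to 0$ using Lemma~\ref{lem:UA} and Kahn's theorem~(\ref{eq:Kahn}), and further reduction to $G_i=\Ga$ via composition series---are correct and match the paper exactly. The genuine gap is the finiteness of $(\Ga^{\otimesM n})(\Spec F)$ itself: you offer only heuristics (normal bases, Artin--Schreier towers, an unspecified bound on the degree of contributing extensions, or a hoped-for comparison with de Rham--Witt forms) and explicitly label this step ``the main obstacle.'' None of these is an argument, and the de Rham--Witt comparison in \cite{IR} concerns $T(\Ga,\Gm^{\times n})\simeq\Omega^n$, not the Mackey product $(\Ga)^{\otimesM n}$.

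The paper proves this step by induction on $n$, and the device you are missing is elementary. The group $(\Ga^{\otimesM n})(x)$ carries an $F$-vector space structure via $a\cdot\{a_1,\ldots,a_n\}_{x'/x}=\{j^{\ast}(a)a_1,\ldots,a_n\}_{x'/x}$. By the projection formula and (M3) one has $\{1,a_2,\ldots,a_n\}_{x'/x}=\{1,\{a_2,\ldots,a_n\}_{x'/x}\}_{x/x}$, which takes only finitely many values by the induction hypothesis on $(\Ga^{\otimesM(n-1)})(x)$, and $\{a_2\cdots a_n,1,\ldots,1\}_{x'/x}=\{\Tr_{x'/x}(a_2\cdots a_n),1,\ldots,1\}_{x/x}$, which takes finitely many values because $F$ is finite. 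Hence the $F$-subspace $I(x)$ spanned by their differences is finite. Modulo $I(x)$, the scalar action gives $\ol{\{a_1,\ldots,a_n\}}_{x'/x'}=a_1\cdot\ol{\{1,a_2,\ldots,a_n\}}_{x'/x'}=\ol{\{a_1\cdots a_n,1,\ldots,1\}}_{x'/x'}$, and pushing forward yields $\ol{\{a_1,\ldots,a_n\}}_{x'/x}=\ol{\{\Tr_{x'/x}(a_1\cdots a_n),1,\ldots,1\}}_{x/x}$, a symbol over $F$ itself. So modulo the finite group $I(x)$ only the single extension $F/F$ contributes, and the quotient is finite.
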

\begin{proof}
The algebraic group $G_i$ has a decomposition 
$$
0 \to U_i \to G_i \to A_i \to 0
$$ 
with unipotent part $U_i$ and a semi-abelian variety $A_i$ over $F$. 
Replacing the product of reciprocity functors 
by the Mackey products in (\ref{eq:diagram}),  
Kahn's theorem (\ref{eq:Kahn}) and Lemma \ref{lem:UA} 
gives a surjection 
$(U_1\otimesM\cdots \otimesM U_n)(x) \surj (G_1\otimesM \cdots \otimesM G_n)(x)$. 
Thus it is enough to show the assertion for 
$G_i = U_i$ for all $i$. 
A composition series of the unipotent group $G := G_i$: 
$$
  0 = G^r \subset \cdots \subset G^1 \subset G,
$$
each $G^i/G^{i+1} \simeq \Ga$. 
By the right exactness (M2), 
we may assume $G_i = \Ga$ for all $i$ and $x = \Spec F$ 
without loss of generality. 
We show the finiteness of $(\Ga)^{\otimesM\, n}(x)$ by 
induction on $n$. 
The case of $n=1$, there is noting to show. 
For $n>1$, we assume that $(\Ga)^{\otimesM\, (n-1)}(x)$ 
is finite. 
The group $(\Ga)^{\otimesM\, n}(x)$ has a structure of 
an $F$-vector space given by 
$a\{a_1,\ldots , a_n\}_{x'/x} := \{j^{\ast}(a)a_1,\ldots , a_n\}_{x'/x}$, 
for any $a\in F$ and a symbol $\{a_1,\ldots ,a_n\}_{x'/x}$ on a finite point $j:x'\to x = \Spec F$. 
Consider a subspace $I(x)$ of 
$(\Ga)^{\otimesM\, n}(x)$ generated by 
the elements of the form 
$$
\{1, a_2,\ldots, a_n\}_{x'/x} - 
\{a_2\cdots a_n, 1, \ldots, 1\}_{x'/x}.
$$ 
By identifying the canonical isomorphism 
$(\Ga)^{\otimesM\,n} \simeq \Ga\otimesM (\Ga)^{\otimesM\, (n-1)}$ by (M1) 
and $j^{\ast}(1) = 1 \in \Ga(x)$, 
we have 
\begin{align*}
\{1, a_2,\ldots, a_n\}_{x'/x}  
  &= \{j^{\ast}(1), \{a_2,\ldots, a_n\}_{x'/x'}\}_{x'/x} \\
  &= \{1, j_{\ast}\{a_2,\ldots, a_n\}_{x'/x'}\}_{x/x} \quad \mbox{by the projection formula}\\
  &= \{1, \{a_2,\ldots, a_n\}_{x'/x}\}_{x/x}\quad \mbox{by (M3)}.  
\end{align*}
By the induction hypothesis, the elements of this form are finite. 
On the other hand, the projection formula implies  
$$
\{a_2\cdots a_n, 1,\ldots , 1\}_{x'/x} 
= \{j_{\ast}(a_2 \cdots a_n), 1,\ldots , 1\}_{x/x}. 
$$ 
Thus the subspace $I(x)$ is finite. 
Next we consider a subspace $S(x)$ of 
the quotient $Q(x) := (\Ga)^{\otimesM\, n}(x)/I(x)$ 
generated by symbols of the form $\ol{\{a_1,\ldots, a_n\}}_{x/x}$. 
Here we denote by $\ol{\{a_1,\ldots, a_n\}}_{x'/x}$ 
the image of $\{a_1,\ldots , a_n\}_{x'/x}$ in the quotient $Q(x)$. 
It is easy to see that the subspace $S(x)$ is finite. 
For any symbol $\ol{\{a_1,\ldots , a_n\}}_{x'/x}$ in $Q(x)$ 
on a finite point $j:x' \to x$, 
we have 
\begin{align*}
\ol{\{a_1,\ldots , a_n\}}_{x'/x} &= j_{\ast}(\ol{\{a_1,\ldots , a_n\}}_{x'/x'}) \quad \mbox{by (M3)}\\
                   &= j_{\ast} (a_1\ol{\{1, a_2, \ldots ,a_n\}}_{x'/x'}) 
\quad \mbox{because of $\ol{\{a_1,\ldots , a_n\}}_{x'/x'} \in Q(x')$}\\
                   &= \ol{\{a_1\cdots a_n,1,\ldots , 1\}}_{x'/x} \quad \mbox{by (M3)}\\
                   &= \ol{\{j_{\ast}(a_1\cdots a_n),1,\ldots , 1\}}_{x/x} \quad \mbox{by the projection formula}.
\end{align*}
Thus we obtain $Q(x) = S(x)$ and the assertion follows from it. 
\end{proof}

\section{Applications}
\label{sec:app}
Let $X$ be a smooth (and connected) variety over a finite field $F$. 
Assume that there is a smooth compactification $\ol{X}$ of $X$, that is, 
a projective smooth variety which contains $X$ as an open subscheme.  
Let $D$ be an effective divisor on $\ol{X}$ with support in $\ol{X} \ssm X$.  
To consider the ramification along the divisor on $D$ 
here we recall the relative Chow group 
$\CH_0(X,D)$  of the pair $(X,D)$ 
(\Cf \cite{EK12}, Set.\ 8.1, see also \cite{Rus10}, Sect.\ 3.4 and 3.5). 
Define 
$$
 \CH_0(X,D) := \Coker\left(\div:\bigoplus_{\phi:C \to X}P_C(\ol{\phi}^{\ast}D) \to Z_0(X)\right), 
$$
where 
the direct sum runs over the normalization $\phi:C\to X$ of a curve in $X$, 
$Z_0(X)$ is the group of $0$-cycles on $X$, 
$\ol{\phi}:\ol{C} \to \ol{X}$ is the extension of the map $\phi$ to 
the smooth compactification $\ol{C}$ of $C$, 
the map 
$\div$ is given by the divisor map on each curve $C$ and 
$$
  P_{C}(\ol{\phi}^{\,\ast}D) := \{f \in F(C)^{\times}\ |\ f\equiv 1 \mod \ol{\phi}^{\,\ast}D + (\ol{C}\ssm C)_{\red}\}. 
$$ 
Putting $X_x := X\times_{\Spec F} x$ and denoting by $D_x$ the 
pull-back of $D$ to $\ol{X}_x := \ol{X}\times_{\Spec F}x$ 
for any finite point $x\to \Spec F$, the assignment
$$
  \sCH_0(X,D):x\mapsto \CH_0(X_x, D_x)
$$
gives a Mackey functor $\sCH_0(X,D)$. 
As an application of the main theorem, 
we show the following finiteness theorem 
of the relative Chow group:

\begin{thm}
\label{thm:CH}
Let  $X_1,\ldots ,X_n$ 
be smooth and connected curves
over a finite field $F$ with $X_i(F)\neq \emptyset$ 
and put $X := X_1\times \cdots \times X_n$.  
For  an effective divisor $D$ on $\ol{X} := \ol{X}_1\times \cdots \times \ol{X}_n$ 
with support in $\ol{X}\ssm X$,  
the kernel of the degree map $\CH_0(X,D)^0 := \Ker(\deg:\CH_0(X,D) \to \Z)$ 
is finite. 
\end{thm}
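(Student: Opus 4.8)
The plan is to reduce the finiteness of $\CH_0(X,D)^0$ to a statement about the Mackey product of algebraic groups, and then invoke Theorem~\ref{thm:M}. First I would recall the description, due to Kato--Saito and Russell (see also \cite{EK12}, \cite{Rus10}), of the relative Chow group of a product of curves in terms of Somekawa-type $K$-groups. Concretely, for each $i$ let $J_i$ denote the generalized Jacobian of $X_i$ attached to the modulus $\mathfrak{m}_i := (\ol{X}_i \ssm X_i)_{\red} + D_i$ where $D_i$ is the ``$i$-th component'' of $D$ (more precisely, one must choose a modulus $\mathfrak{m}$ on $\ol X$ adapted to $D$ and descend it to each factor); each $J_i$ is a smooth connected commutative algebraic group over $F$, generally with nontrivial unipotent part. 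The key input is a surjection
$$
  (J_1 \otimesM \cdots \otimesM J_n)(\Spec F) \surj \CH_0(X,D)^0,
$$
or at least a surjection onto $\CH_0(X,D)^0$ from a group built out of the Mackey products $(J_1 \otimesM \cdots \otimesM J_n)(x)$ over finite points $x$, using that each $X_i$ has an $F$-rational point to split off the degree. Granting such a surjection, the finiteness of $(J_1 \otimesM \cdots \otimesM J_n)(\Spec F)$ from Theorem~\ref{thm:M} immediately yields the finiteness of $\CH_0(X,D)^0$.

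The mechanism behind the surjection is the same as in the proper case recalled in the introduction: a zero-cycle on $X$ of degree $0$, supported (after a moving lemma / Bertini-type argument as in the corollary above) on products of rational points or on the image of a single curve, can be written via the Albanese-type map $X_i \to J_i$ as a symbol $\{a_1,\dots,a_n\}$ with $a_i \in J_i(F)$; the divisor relations defining $\CH_0(X,D)$ translate, via the reciprocity law for the generalized Jacobians with their moduli, precisely into the projection-formula relations~(\ref{eq:P}) defining the Mackey product, together with Weil reciprocity on each curve. So the steps, in order, are: (1) fix a modulus $\mathfrak{m}$ on $\ol X$ with $|\mathfrak{m}| \subset \ol X \ssm X$ and $\mathfrak{m} \ge D$, and the associated generalized Jacobians $J_i$ of $X_i$; (2) construct the map $\CH_0(X,D)^0 \to (J_1 \otimesM \cdots \otimesM J_n)(\Spec F)$ using the albanese maps $\alb_i : X_i \to J_i$ together with a pull-back to curves as in the proof of the corollary, and check it is well-defined by matching the divisor relations on curves mapping to $X$ with the Mackey relations; (3) check surjectivity of the reverse comparison map $(J_1 \otimesM \cdots \otimesM J_n)(\Spec F) \to \CH_0(X,D)^0$ on symbols $\{\alb_1(x_1),\dots,\alb_n(x_n)\}_{x/x}$, using $X_i(F) \neq \emptyset$; (4) apply Theorem~\ref{thm:M} to conclude.

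The main obstacle, as I see it, is step~(2)--(3): proving that the relations defining $\CH_0(X,D)$ coincide (up to the degree-zero part) with the relation $R(\Spec F)$ of the Mackey product $J_1 \otimesM \cdots \otimesM J_n$. On the Chow side the relations come from rational functions $f \in P_C(\ol\phi^{\,*}D)$ on normalizations $\phi : C \to X$ of curves, so one must (a) reduce, by a Bertini/Poonen argument as already used in the corollary, to curves $C$ that are themselves products-adjacent or that project dominantly to at most one factor, and (b) for such a curve, use that $P_C(\ol\phi^{\,*}D)$ is built into the definition of the modulus of the generalized Jacobian, so that $\div(f)$ maps to zero in $J_i$ and hence the corresponding symbol relation holds. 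A subtlety is that the modulus condition $f \equiv 1 \bmod \ol\phi^{\,*}D + (\ol C \ssm C)_{\red}$ involves the pull-back of $D$ along $\phi$, and one must ensure this is controlled by a fixed modulus $\mathfrak m_i$ on $\ol X_i$ independent of $C$; this is where choosing $\mathfrak m$ large enough and using that $D$ is supported on $\ol X \ssm X = \bigcup_i \ol X_1 \times \cdots \times (\ol X_i \ssm X_i) \times \cdots \times \ol X_n$ enters. Once this dictionary between divisor relations and projection-formula relations is established, the finiteness is a formal consequence of Theorem~\ref{thm:M}; I do not expect the remaining bookkeeping (splitting off the degree using the rational points, handling transfers along finite points) to present real difficulty.
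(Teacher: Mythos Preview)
Your overall strategy is the same as the paper's --- build a surjection onto $\CH_0(X,D)^0$ from a Mackey product of generalized Jacobians and then invoke Theorem~\ref{thm:M} --- but you have made the construction of that surjection much harder than it needs to be, and the ``main obstacle'' you identify is one the paper simply sidesteps.

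The paper does not attempt to compare the divisor relations defining $\CH_0(X,D)$ with the projection-formula relations in $J_1\otimesM\cdots\otimesM J_n$ directly. Instead it first passes through the Mackey functors $\sCH_0(X_i,D_i)$ themselves: after reducing to a divisor of product form $D = \sum_i X_1\times\cdots\times D_i\times\cdots\times X_n$, there is an \emph{intersection product} map
\[
\psi:\big(\sCH_0(X_1,D_1)\otimesM\cdots\otimesM\sCH_0(X_n,D_n)\big)(\Spec F)\longrightarrow \CH_0(X,D),
\]
given on symbols by $\{a_1,\ldots,a_n\}_{x/\Spec F}\mapsto (j_x)_{\ast}(p_1^{\ast}a_1\cap\cdots\cap p_n^{\ast}a_n)$. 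Well-definedness here is essentially formal (it is the compatibility of the external/internal cycle product with rational equivalence with modulus on each factor), and surjectivity is immediate once one uses the push-forward compatibility to reduce to rational points. Only \emph{after} this does one plug in the splitting $\sCH_0(X_i,D_i)\simeq \Z\oplus J_{X_i,D_i}$ coming from $X_i(F)\neq\emptyset$; expanding the Mackey product by (M1) yields a surjection
\[
\bigoplus_{\emptyset\neq S\subset\{1,\ldots,n\}}\Big(\bigotimes^{M}_{i\in S} J_{X_i,D_i}\Big)(\Spec F)\ \surj\ \CH_0(X,D)^0,
\]
and every summand on the left is finite by Theorem~\ref{thm:M} (the singletons because $F$ is finite). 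So no Bertini/Poonen moving argument and no matching of relations is required; your step~(2) is unnecessary for the finiteness statement, and your step~(3) becomes trivial once $\psi$ is phrased at the level of $\sCH_0(X_i,D_i)$ rather than $J_i$. Note also that your claimed surjection from $(J_1\otimesM\cdots\otimesM J_n)(\Spec F)$ alone is not quite right: the cross-terms with fewer Jacobian factors (e.g.\ $J_i(F)$) genuinely appear, though of course they are finite too.
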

\begin{proof}
We show the case $n=2$ (the proof is same for $n>2$). 
For sufficiently large divisors $D_i$ on $\ol{X}_i$ for $i = 1,2$, 
there is a surjection $\CH_0(X,D_1\times X_2  + X_1 \times D_2) \surj \CH_0(X,D)$. 
Thus we may assume that $D$ is a divisor of the form 
$D_1\times X_2  + X_1 \times D_2$. 
Now we consider the map  
\begin{equation}
\label{eq:prod}
  \psi: (\sCH_0(X_1,D_1) \otimesM \sCH_0(X_2,D_2)) (\Spec F) \to \CH_0(X,D)
\end{equation}
defined by  
$\{a_1,a_2\}_{x/\Spec F} \mapsto (j_x)_{\ast}(p_1^{\ast}a_1 \cap p_2^{\ast}a_2)$, 
where $p_i:\ol{X} = \ol{X}_1\times \ol{X}_2 \to \ol{X}_i$ 
is the projection, 
$j_x:X_x \to X$ is given by the base change 
to $x$ 
and $\cap$ is the internal product defined similarly to the ordinal Chow group of $0$-cycles. 
We show that the map $\psi$ is surjective. 
Take a cycle $[x]$ as a generator of $\CH_0(X,D)$ 
which is represented by a closed point $x$ on $X$ 
and is a finite point $j_x:x\to \Spec F$. 
By the definition of $\psi$, 
the push-forward map on the relative Chow group 
and the norm map on the Mackey product are compatible 
as in the following commutative diagram:
$$
 \xymatrix{
  (\sCH_0(X_1, D_1)\otimesM \sCH_0(X_2,D_2)) (x) 
    \ar[d]_{(j_{x})_{\ast}}\ar[r]^-{\psi_{x}} & \CH_0(X_{x},D_{x}) 
    \ar[d]^{(j_{x})_{\ast}}\\ 
   (\sCH_0(X_1, D_1)\otimesM \sCH_0(X_2,D_2)) (\Spec F) \ar[r]^-{\psi} & \CH_0(X,D).
}
$$
Thus to show the surjectivity of $\psi$ we may assume that $x$ 
is an  $F$-rational point. 
The point $x$ is determined by maps $x_i \to X_i$. These points give 
$\psi(\{[x_1],[x_2]\}_{x/x}) = [x]$ and thus $\psi$ is surjective. 

From the assumption $X_i(F)\neq \emptyset$, 
there exists a decomposition 
$\sCH_0(X_i,D_i) \simeq \Z\oplus J_{X_i,D_i}$ 
by the generalized Jacobian variety $J_{X_i,D_i}$ 
of the pair $(X_i,D_i)$  (\cite{AGCF}). 
According to this decomposition we obtain
$$
\sCH_0(X_1,D_1) \otimesM \sCH_0(X_2,D_2) \simeq  
 \Z \oplus J_{X_1,D_1} \oplus J_{X_2,D_2} 
\oplus (J_{X_1,D_1}\otimesM J_{X_2,D_2})  
$$
by (M1). 
From the product (\ref{eq:prod}), 
there exists a surjection 
$$
  J_{X_1,D_1}(F) \oplus J_{X_2,D_2}(F) 
\oplus (J_{X_1,D_1}\otimesM J_{X_2,D_2})(\Spec F) \surj \CH_0(X,D)^0. 
$$
The left is finite by Theorem \ref{thm:M} 
and so is $\CH_0(X,D)^0$. 
\end{proof}

Let $X$ be the product of curves over $F$, 
and $D$ as in the above theorem (Thm. \ref{thm:CH}).  
For each normalization $\phi:C\to X$ of a curve in $X$, 
we have a divisor 
$D_C := \ol{\phi}^{\ast}(D) + (\ol{C} \ssm C)_{\red}$ 
on $\ol{C}$. 
The category of \'etale coverings of $X$ 
with ramification bounded by the collection of divisors $(D_C)_{\phi:C\to X}$ 
forms a Galois category and gives a fundamental group $\pi_1(X,D)$ (\cite{HH09}, Lem.\ 3.3). 
For each such $C$ and a point $x \in \Cbar \ssm C$ 
there is the canonical map 
$G_{C,x}^{\ab} := \Gal(F(C)_x^{\ab}/F(C)_x) \to \pi_1(X)^{\ab}$, 
where $F(C)_x^{\ab}$ is the maximal abelian extension of 
the completion $F(C)_x$ at $x$. 
By the very definition of the coverings, we have  
$$
  \Coker\left(\bigoplus_{\phi:C \to X}\bigoplus_{x \in \Cbar\ssm C} G_{C,x}^{\ab,m_{x}(D_C)} 
\to \pi_1(X)^{\ab}\right) \surj \pi_1(X,D)^{\ab},
$$
where $G_{C,x}^{\ab, m}$ is the $m$-th (upper numbering) ramification subgroup of 
$G_{C,x}^{\ab}$ (\cite{Ser68}, Chap.\ IV, Sect.\ 3) 
and $m_{x}(D_C)$ is the multiplication of the divisor $D_C$ at $x$. 
Using the idele theoretic description of the relative Chow group 
$$
  \CH_0(X,D) \simeq 
\Coker\left(\bigoplus_{\phi:C \to X} F(C)^{\times} \to  
 Z_0(X) \oplus \bigoplus_{\phi:C \to X}\bigoplus_{x \in \Cbar\ssm C} F(C)_x^{\times} /U_{C,x}^{m_{C, x}} \right),
$$
where $U_{C,x}^m = 1 + \mathfrak{m}_{C,x}^m$ is the higher unit group of $F(C)_x$, 
local class field theory (see e.g., \cite{Ser68}, Chap.\ XV) induces the following commutative diagram:
$$
\xymatrix{
  Z_0(X)^0 \ar[d]_{\rho}\ar@{->>}[r] & \CH_0(X,D)^0 \ar[d]^{\rho_D} \\
  \pi_1(X)^{\ab,0} \ar@{->>}[r] & \pi_1(X,D)^{\ab,0}.
}
$$
Here 
$Z_0(X)^0$ is the kernel of the degree map $\deg:Z_0(X) \to \Z$, 
the left vertical map $\rho$ is the reciprocity map on $X$ 
and 
$\pi_1(X,D)^{\ab,0}$ is the geometric part of 
the abelian fundamental group 
(= the kernel of the canonical map $\pi_1(X,D)^{\ab} \to \pi_1(\Spec(F))^{\ab})$.
The image of the left reciprocity map $\rho:Z_0(X)\to \pi_1(X)^{\ab}$ is known to be dense 
(due to Lang \cite{Lang56}) 
and the image of $\rho_D$ is finite by Theorem \ref{thm:CH}.  
Therefore, the map $\rho_D$ is surjective 
and we obtain the following finiteness result. 

\begin{crl} 
Let $X$ and $D$ be as in Theorem \ref{thm:CH}. 
Then, the geometric part of the abelian fundamental group 
$\pi_1(X,D)^{\ab, 0}$ is finite.
\end{crl}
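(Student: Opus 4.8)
The statement is an immediate consequence of Theorem \ref{thm:CH} together with the commutative square displayed just above it, so the plan is to make that deduction precise. First I would recall the setup: the idele-theoretic description of $\CH_0(X,D)$ recalled above, combined with local class field theory at the points of $\ol{C}\ssm C$ for the normalizations $\phi\colon C\to X$ of curves in $X$, produces the commutative diagram with horizontal maps $Z_0(X)^0 \surj \CH_0(X,D)^0$ and $\pi_1(X)^{\ab,0}\surj \pi_1(X,D)^{\ab,0}$ and vertical reciprocity maps $\rho$ and $\rho_D$. The bottom row is surjective by the very definition of $\pi_1(X,D)^{\ab}$ as the quotient of $\pi_1(X)^{\ab}$ classifying coverings whose ramification is bounded by $(D_C)_{\phi\colon C\to X}$.

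Next I would combine two facts about the image of $\rho_D$. On one hand, by Theorem \ref{thm:CH} the group $\CH_0(X,D)^0$ is finite, hence $\Im(\rho_D)$ is a finite subgroup of the profinite group $\pi_1(X,D)^{\ab,0}$ (the latter being a quotient of the profinite group $\pi_1(X)^{\ab,0}$). On the other hand, chasing the diagram and using that $Z_0(X)^0\surj \CH_0(X,D)^0$ is surjective, $\Im(\rho_D)$ contains the image of $Z_0(X)^0\to \pi_1(X,D)^{\ab,0}$ along the bottom-left path; since $\Im(\rho)$ is dense in $\pi_1(X)^{\ab,0}$ by Lang's theorem and $\pi_1(X)^{\ab,0}\surj \pi_1(X,D)^{\ab,0}$, this image — and hence $\Im(\rho_D)$ — is dense in $\pi_1(X,D)^{\ab,0}$. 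A finite subgroup of a profinite group is closed, so a dense finite subgroup is everything; therefore $\rho_D$ is surjective and $\pi_1(X,D)^{\ab,0}=\Im(\rho_D)$ is finite.

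The only nontrivial ingredient beyond this bookkeeping is the construction and commutativity of the diagram itself, i.e.\ the compatibility of the reciprocity map on $0$-cycles with modulus with the quotient defining ramification-bounded coverings, via local class field theory on the boundary curves; this is where one invokes \cite{Ser68} and the idele description above, and I would treat it as the main (but already available) input. Everything else — the passage from ``dense finite subgroup'' to ``whole group'', and the reduction of the finiteness of $\pi_1(X,D)^{\ab,0}$ to that of $\CH_0(X,D)^0$ — is formal.
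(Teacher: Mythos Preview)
Your proposal is correct and follows essentially the same argument as the paper: use the commutative square, invoke Lang's density theorem for $\rho$, combine with the finiteness of $\CH_0(X,D)^0$ from Theorem \ref{thm:CH} to see that $\Im(\rho_D)$ is both dense and finite (hence closed) in the profinite group $\pi_1(X,D)^{\ab,0}$, and conclude surjectivity. The only difference is that you spell out the ``finite $\Rightarrow$ closed $\Rightarrow$ dense implies everything'' step, which the paper leaves implicit.
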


We conclude 
this note by referring to 
recent results in \cite{EK12}. 
H.~Esnault and M.~Kerz showed 
the finiteness of the group $\CH_0(X,D)^0$ 
for a smooth variety $X$ over a finite field 
and an effective Cartier divisor $D$ with support in the boundary 
$\ol{X} \ssm X$ 
only assuming the existence of some normal compactification $\ol{X}$ of $X$ 
 (\cite{EK12}, Thm.\ 8.1).  
This is an application of Deligne's 
finiteness theorem for $l$-adic Galois representations of function fields. 
Using this finiteness theorem 
instead of Theorem \ref{thm:CH} in the above arguments, 
we also obtain the finiteness of $\pi_1(X,D)^{\ab, 0}$ 
for the variety $X$.

%

\def\cprime{$'$}
\providecommand{\bysame}{\leavevmode\hbox to3em{\hrulefill}\thinspace}
\providecommand{\href}[2]{#2}

\noindent
Toshiro Hiranouchi \\
Department of Mathematics, Graduate School of Science, Hiroshima University\\
1-3-1 Kagamiyama, Higashi-Hiroshima, 739-8526 Japan\\
Email address: {\tt hira@hiroshima-u.ac.jp}

\end{document}